\documentclass[12pt]{amsart}
\usepackage{amsmath, amsthm, amssymb}

\usepackage{xcolor}
\usepackage{float}
\usepackage{mathtools}
\usepackage{cite}
\usepackage{stix}
\usepackage{setspace}
\usepackage{adjustbox}
\usepackage{tabularx}
\usepackage{subcaption}
\usepackage{braket}
\usepackage{tikzit}
\usepackage{nicematrix}
\usepackage{blindtext}
\usepackage[breaklinks]{hyperref}
\usepackage{bookmark}
\usepackage[toc,page]{appendix}
\usepackage[font={small}]{caption}
\usepackage{tikzcode}

\newcommand{\bmb}{\left( \begin{array}{rr}}
\newcommand{\enm}{\end{array}\right)}
\DeclareTextFontCommand{\emph}{\bfseries\em}

\newcommand{\cM}{\mathcal M}

\hypersetup{
    colorlinks,
    linkcolor={red!50!black},
    citecolor={blue!100!black},
    urlcolor={blue!80!black}
}

\newcommand{\Z}{{\mathbb Z}}

\newcommand{\R}{{\mathbb R}}

\newcommand{\oV}{{\overline{V}}}

\numberwithin{equation}{section}

\theoremstyle{definition}
\newtheorem{thm}{Theorem}[section]
\newtheorem{prop}[thm]{Proposition}

\newtheorem{lemma}[thm]{Lemma}

\newtheorem{cor}[thm]{Corollary}
\newtheorem{remark}[thm]{Remark}

\begin{document}

\title{Decomposing the Action of Macdonald Operators for Classical Types}
\author{Hieu Trung Vu}
\address{Yau Mathematical Sciences Center, Tsinghua University, Beijing, China}
\email{htrungvu@mail.tsinghua.edu.cn}

\begin{abstract}
We study Macdonald operators associated to classical root systems of types $B_N$, $C_N$, and $D_N$, continuing the work in \cite{DV26}. We provide explicit formulas for decomposing the cumulative action of these operators in terms of sum of the actions of lower-rank operators and auxiliary operators. We also discuss the same idea in the elliptic case, also known as Ruijsenaars operators, and show that under the trigonometric limit, the identity reduces to type-$A_N$ decomposition.
\end{abstract}

\maketitle

\section{Introduction}
The Macdonald operators, also known as Macdonald-Ruijsenaars operators, are a family of commuting difference operators that act on the space of symmetric functions. Originally, they were defined as the following: 
\begin{equation}
    \label{eq:macdonald_operator}
    \cM^r_N(x_1, \ldots, x_N) = \sum_{\substack{I \subset [N] \\ |I| = r}} \prod_{i \in I, j \notin I} \frac{t x_i - x_j}{x_i - x_j} \Gamma_{I},
\end{equation}
where $[N]:=\{1, \dots, N\}$, $\displaystyle \Gamma_{I} = \prod_{i \in I} \Gamma_i$ and $\Gamma_i$ is the $q$-shift operator defined by
\[
(\Gamma_i f)(x_1, \ldots, x_N) = f(x_1, \ldots, q x_i, \ldots, x_N),
\]
with complex parameters $q$ and $t$. They were found around the same time by Ruijsenaars in \cite{Ruijsenaars1987} as the trigonometric limit of a larger family of operators, the Hamiltonians of the Ruijsenaars-Schneider model. The operators of the form \eqref{eq:macdonald_operator} are referred to as degree-$r$ Macdonald operators associated with the root system $A_{N-1}$. In \cite{Macdonald2000}, Macdonald defined the Macdonald operators associated to other root systems. These operators form a family of commuting difference operators acting diagonally on the Macdonald symmetric functions. In this note, we will consider an extension of a recent result on the rank recursion for Macdonald operators type $A_{N-1}$ to Macdonald operators of classical types \cite{DV26}. Upon fixing the root data as in Table \ref{table:root-weight-data}, the Macdonald operators associated to classical root systems of types $B_N$, $C_N$, and $D_N$ take the form \eqref{eq:type D_1}-\eqref{eq:type_CN_macdo}. These explicit formulas showed up in \cite{difrancesco2021macdonald} and can also be derived from Weyl orbit summation in \cite{Macdonald2000}, which we include the details in the appendix for completion.

\begin{align}
    \label{eq:type D_1}
    D_N: \quad \mathcal E_1^{\left(D_N\right)}&=\sum_{\epsilon= \pm 1} \sum_{i=1}^N \prod_{j \neq i} \frac{1-t x_i^\epsilon x_j}{1-x_i^\epsilon x_j} \frac{t x_i^\epsilon-x_j}{x_i^\epsilon-x_j} \Gamma_i^{2 \epsilon}\\
    \label{eq:type D_N-1}
    \quad \mathcal{E}_{N-1}^{\left(D_N\right)}&=\sum_{\substack{\epsilon_1, \ldots, \epsilon_N= \pm 1 \\ \epsilon_1 \epsilon_2 \cdots \epsilon_N=-1}} \prod_{1 \leq i<j \leq N} \frac{1-t x_i^{\epsilon_i} x_j^{\epsilon_j}}{1-x_i^{\epsilon_i} x_j^{\epsilon_j}} \prod_{i=1}^N \Gamma_i^{\epsilon_i}\\
    \label{eq:type D_N}
    \mathcal{E}_N^{\left(D_N\right)}&=\sum_{\substack{\epsilon_1, \ldots, \epsilon_N= \pm 1 \\ \epsilon_1 \epsilon_2 \cdots \epsilon_N=1}} \prod_{1 \leq i<j \leq N} \frac{1-t x_i^{\epsilon_i} x_j^{\epsilon_j}}{1-x_i^{\epsilon_i} x_j^{\epsilon_j}} \prod_{i=1}^N \Gamma_i^{\epsilon_i}\\
    \label{eq:type_BN_macdo}
    B_N: \quad \mathcal{E}_1^{\left(B_N\right)}&=\sum_{\epsilon= \pm 1} \sum_{i=1}^N \frac{1-t x_i^\epsilon}{1-x_i^\epsilon} \prod_{j \neq i} \frac{1-t x_i^\epsilon x_j}{1-x_i^\epsilon x_j} \frac{t x_i^\epsilon-x_j}{x_i^\epsilon-x_j} \Gamma_i^{2 \epsilon},\\
    \label{eq:type_CN_macdo}
    C_N: \quad \mathcal E_N^{\left(C_N\right)}&=\sum_{\epsilon_1, \ldots, \epsilon_N= \pm 1} \prod_{i=1}^N \frac{1-t x_i^{2 \epsilon_i}}{1-x_i^{2 \epsilon_i}} \prod_{1 \leq i<j \leq N} \frac{1-t x_i^{\epsilon_i} x_j^{\epsilon_j}}{1-x_i^{\epsilon_i} x_j^{\epsilon_j}} \prod_{i=1}^N \Gamma_i^{\epsilon_i}.
\end{align}
In \cite{DV26}, we were able to write $\mathcal M^r_n(x_1,\ldots,x_N)$ as a sum of lower-rank Macdonald operators, providing a recursive structure for type $A_{N-1}$ Macdonald operators. One should understand this result as the cumulative action of the Macdonald operator on a function of $N$-variables being expressed as the sum of action of lower rank Macdonald operators to the same function. Motivated by the same idea, we decompose the cumulative action of all classical types Macdonald operators  \eqref{eq:type D_1}-\eqref{eq:type_CN_macdo}. For these cases, the decomposition involves not only lower-rank Macdonald operators associated to the same root system but also certain operators that we called auxiliary operators, which do not preserve the ring of symmetric polynomials. The three-term decomposition structure we obtain from type-$A_N$ serves as a guiding principle for the analogous decompositions of $\mathcal E^{(B_N)}_1$ and $\mathcal E^{(D_N)}_1$. The three-term recurrence relation of type $A_N$ at degree-1 can be extended to the elliptic case, known as the Ruijsenaars operators defined in \cite{Ruijsenaars1987}.

Our main result is the following relations: 
\begin{align*}
    \mathcal E_1^{(B_N)}&= w(x_p^{-1},x_p)\,\mathcal E^B\!\big(\mathcal S\setminus\{(p,+1)\}\big)  + w(x_p,x_p^{-1})\,\mathcal E^B\!\big(\mathcal S\setminus\{(p,-1)\}\big)  - t\,\mathcal E^{(B_{N-1})}_1\!\big(\widehat{x}_p\big)\\
    \mathcal E_N^{(C_N)}&=\sum_{\sigma=\pm1}
\left(\frac{1-tx_p^{2\sigma}}{1-x_p^{2\sigma}}\right)
\mathcal E^C\!\bigl([N]\setminus\{p\};\{(p,\sigma)\}\bigr)\,\Gamma_p^{\sigma},\\
 \mathcal E_1^{(D_N)}&= w(x_p^{-1},x_p)\,\mathcal E^D\!\big(\mathcal S\setminus\{(p,+1)\}\big)  + w(x_p,x_p^{-1})\,\mathcal E^D\!\big(\mathcal S\setminus\{(p,-1)\}\big)  - t\,\mathcal E^{(D_{N-1})}\!\big(\widehat{x}_p\big).
\end{align*}
where $\mathcal E^B(S), \mathcal E^C(S), \mathcal E^D(S)$, for some index subset $S \subset \{1, \dots,n\}$, are the auxiliary operators defined in \eqref{eq:typeB_aux}, \eqref{eq: type C auxiliary op} and \eqref{eq:typeD_aux} with some prefactors $\displaystyle w(x_i,x_j):=\frac{tx_i-x_j}{x_i-x_j}$.

\section{\texorpdfstring{$\mathcal E^{(B_N)}_1$ and $\mathcal E^{(D_N)}_1$ Decomposition}{E1 of BN and E1 of DN decomposition}}
Let $\mathcal S:=\{(i, \epsilon): \epsilon = \pm 1, 1\leq i\leq N\}$ and fix the convention that for any $\alpha = (i, \epsilon) \in \mathcal S$, $x_\alpha:=x^\epsilon_i$ and $x_0=1$. For every subset $S\subseteq\mathcal S$ define the auxiliary operator
\begin{equation}
\label{eq:typeB_aux}
\mathcal E^B(S)
:=\sum_{(i,\epsilon)\in S}
\frac{1}{w(x_i^\epsilon,x_i^{-\epsilon})}
\prod_{\substack{\beta\in S\cup\{0\}\\ \beta\neq(i,\epsilon)}}
w(x_i^\epsilon,x_{\beta})\;\Gamma_i^{2\epsilon}.
\end{equation}
The following corollary provides a different expression for the Macdonald operator of type $B_N$ in \eqref{eq:type_BN_macdo}.
\begin{cor}
    \label{cor:B_N macdo rewrite}
\begin{equation}
    \begin{aligned}
    \mathcal{E}_1^{\left(B_N\right)}
    &=\mathcal E^B(\mathcal S) =\sum_{(i, \epsilon) \in \mathcal S}
    \frac{1}{w(x_i^\epsilon,x_i^{-\epsilon})}
    \prod_{\substack{\beta \in \mathcal S \cup \{0\} \\ \beta \neq (i, \epsilon)}}
    w(x_i^\epsilon, x_\beta)\,\Gamma_i^{2\epsilon}
    \end{aligned}
\end{equation}
\end{cor}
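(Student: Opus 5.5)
The identity is a term-by-term comparison. Both sides are sums over $(i,\epsilon)\in\mathcal S$ of a rational coefficient times $\Gamma_i^{2\epsilon}$. I will fix $(i,\epsilon)$ and show that the coefficient in $\mathcal E^B(\mathcal S)$ equals the coefficient in \eqref{eq:type_BN_macdo}. The first equality in the statement is then just \eqref{eq:typeB_aux} with $S=\mathcal S$.

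For fixed $(i,\epsilon)$, I split the index set $\{\beta\in\mathcal S\cup\{0\}:\beta\neq(i,\epsilon)\}$ into three groups:
\begin{itemize}
\item the element $\beta=0$;
\item the element $\beta=(i,-\epsilon)$;
\item the pairs $\beta=(j,\pm1)$ with $j\neq i$.
\end{itemize}
The factor for $\beta=(i,-\epsilon)$ is $w(x_i^\epsilon,x_i^{-\epsilon})$. It cancels exactly against the prefactor $1/w(x_i^\epsilon,x_i^{-\epsilon})$. This is why that normalization appears in \eqref{eq:typeB_aux}.

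Next I compute the remaining factors directly. For $\beta=0$,
\[
w(x_i^\epsilon,1)=\frac{tx_i^\epsilon-1}{x_i^\epsilon-1}=\frac{1-tx_i^\epsilon}{1-x_i^\epsilon},
\]
which is the short-root factor in \eqref{eq:type_BN_macdo}. For each $j\neq i$, the factor with $\beta=(j,+1)$ is
\[
w(x_i^\epsilon,x_j)=\frac{tx_i^\epsilon-x_j}{x_i^\epsilon-x_j},
\]
which is already in the required form. The factor with $\beta=(j,-1)$ needs one manipulation: multiply numerator and denominator by $x_j$. This gives
\[
w(x_i^\epsilon,x_j^{-1})=\frac{tx_i^\epsilon-x_j^{-1}}{x_i^\epsilon-x_j^{-1}}=\frac{1-tx_i^\epsilon x_j}{1-x_i^\epsilon x_j}.
\]
Taking the product over $j\neq i$ reproduces $\prod_{j\neq i}\frac{1-tx_i^\epsilon x_j}{1-x_i^\epsilon x_j}\frac{tx_i^\epsilon-x_j}{x_i^\epsilon-x_j}$.

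Multiplying the three groups together gives exactly the coefficient of $\Gamma_i^{2\epsilon}$ in \eqref{eq:type_BN_macdo}, and summing over $(i,\epsilon)$ finishes the proof. The argument is essentially bookkeeping. The only point needing care is the $\beta=(j,-1)$ factor, where I must clear denominators without introducing a sign error; the common factor $-1$ from numerator and denominator cancels. The convention $x_0=1$ has to be applied consistently so that the $B_N$-specific factor $\frac{1-tx_i^\epsilon}{1-x_i^\epsilon}$ comes out.
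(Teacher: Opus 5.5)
Your proof is correct and follows the paper's argument. Like the paper, you split the product over $\beta\neq(i,\epsilon)$ into three parts: the $\beta=(i,-\epsilon)$ factor, which cancels the prefactor $1/w(x_i^\epsilon,x_i^{-\epsilon})$, the $\beta=0$ factor, and the pairs $(j,\pm1)$ with $j\neq i$; you then match coefficients of $\Gamma_i^{2\epsilon}$. You also write out the identities $w(x_i^\epsilon,1)=\frac{1-tx_i^\epsilon}{1-x_i^\epsilon}$ and $w(x_i^\epsilon,x_j^{-1})=\frac{1-tx_i^\epsilon x_j}{1-x_i^\epsilon x_j}$, which the paper leaves implicit. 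One small correction: your computation proves the equality $\mathcal E_1^{(B_N)}=\mathcal E^B(\mathcal S)$, and it is the other equality that is just the definition.
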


\begin{proof}
In the product
\[
\prod_{\substack{\beta \in \mathcal S \cup \{0\} \\ \beta \neq (i, \epsilon)}} w(x_i^\epsilon, x_\beta),
\]
there are contributions of $w(x_i^\epsilon, x_i^{-\epsilon})$, $w(x_i^\epsilon, x_0)=w(x_i^\epsilon, 1)$, and $w(x_i^\epsilon,x_j)\,w(x_i^\epsilon,x_j^{-1})$ for every $j\neq i$. The term $w(x_i^\epsilon,x_i^{-\epsilon})$ cancels the prefactor $1/w(x_i^\epsilon,x_i^{-\epsilon})$, while the remaining terms give exactly the product in the original definition of the Macdonald operator of type $B_N$, $\mathcal E_1^{B_N}$.
\end{proof}

The case of degree-1 $A_{N}$ Macdonald operator, the rank recurrence relation reads: 
\begin{equation} 
    \label{eq: Level 1 type A recurrence}
\begin{aligned}
\mathcal{M}_{N+1}^1\left(x_1, \ldots, x_{N+1}\right) & =\frac{t x_N-x_{N+1}}{x_N-x_{N+1}} \mathcal{M}_N^1\left(x_1, \ldots, x_N\right)+\frac{t x_{N+1}-x_N}{x_{N+1}-x_N} \mathcal{M}_N^1\left(x_1, \ldots, x_{N-1}, x_{N+1}\right) \\
& -t \mathcal{M}_{N-1}^1\left(x_1, \ldots, x_{N-1}\right)
\end{aligned}
\end{equation}
It is more convenient to rewrite this recurrence relation in terms of $w(x_i,x_j)$. Namely,  define the operator
\[
A_{[N]}:=\sum_{i \in [N]} \prod_{\substack{j \in [N] \\ j \neq i}} w(x_i,x_j) \Gamma_i=\cM^1_N(x_1, \dots, x_N).
\]
Then \eqref{eq: Level 1 type A recurrence} can be written as 
\[
A_{[N+1]} = w(x_N,x_{N+1}) A_{[N+1]\setminus \{N+1\}} + w(x_{N+1},x_N) A_{[N+1]\setminus \{N\}} - t A_{[N-1]}.
\]
In general, one can generalize \eqref{eq: Level 1 type A recurrence} to any two indices $p,q \in [N+1]$, 
\begin{prop}
    \begin{equation}
        \label{eq: type A level 1 recur in omega}
\begin{aligned}
A_{[N+1]}
&= w(x_p,x_q) A_{[N+1]\setminus \{q\}} + w(x_q,x_p) A_{[N+1]\setminus \{p\}} - t A_{[N-1]}
\end{aligned}
\end{equation}
\end{prop}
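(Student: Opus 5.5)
The identity is an equality of first-order difference operators, so I will prove it by comparing the coefficients of each shift operator $\Gamma_i$, $i\in[N+1]$, on both sides. Here $A_{[N-1]}$ is read as $A_{[N+1]\setminus\{p,q\}}$, which is the meaning it has in \eqref{eq: Level 1 type A recurrence} when $p=N$ and $q=N+1$. The proposition is symmetric in $p$ and $q$. For a fixed $i$, write
\[
P_i:=\prod_{j\in[N+1]\setminus\{i,p,q\}} w(x_i,x_j)
\]
for the part of the coefficient that does not involve $x_p$ or $x_q$.

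\emph{Case $i=p$ (and symmetrically $i=q$).} On the left, the coefficient of $\Gamma_p$ is $P_p\,w(x_p,x_q)$. On the right, only $A_{[N+1]\setminus\{q\}}$ contains $\Gamma_p$; the other two operators omit the index $p$. The coefficient of $\Gamma_p$ in $A_{[N+1]\setminus\{q\}}$ is exactly $P_p$, so after multiplying by $w(x_p,x_q)$ the two sides agree. The case $i=q$ is the same argument with $p$ and $q$ interchanged.

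\emph{Case $i\notin\{p,q\}$.} The left coefficient is $P_i\,w(x_i,x_p)\,w(x_i,x_q)$. The right coefficient is
\[
P_i\bigl(w(x_p,x_q)\,w(x_i,x_p)+w(x_q,x_p)\,w(x_i,x_q)-t\bigr).
\]
So everything reduces to the three-variable rational identity
\[
w(x_i,x_p)\,w(x_i,x_q)=w(x_p,x_q)\,w(x_i,x_p)+w(x_q,x_p)\,w(x_i,x_q)-t .
\]
This is the only real step, though it is not a serious obstacle. I would prove it by regarding both sides as rational functions of $x_i$ with $x_p\neq x_q$ fixed. Both sides have at most simple poles, located at $x_i=x_p$ and $x_i=x_q$.
\begin{itemize}
\item At $x_i=x_p$: the residue of the left side is $\operatorname{Res}\,w(x_i,x_p)\cdot w(x_p,x_q)$, and on the right only the first term is singular there, with the same residue.
\item At $x_i=x_q$: symmetrically, only the second term on the right is singular, and its residue matches that of the left side.
\item As $x_i\to\infty$: the left side tends to $t^2$. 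The right side tends to $t\bigl(w(x_p,x_q)+w(x_q,x_p)\bigr)-t$, which also equals $t^2$ because $w(x_p,x_q)+w(x_q,x_p)=1+t$.
\end{itemize}
The difference of the two sides is therefore a rational function of $x_i$ with no poles that vanishes at infinity, hence it is identically zero. Alternatively, one can clear denominators and expand directly.

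Summing the matching coefficients over all $i\in[N+1]$ gives \eqref{eq: type A level 1 recur in omega}.
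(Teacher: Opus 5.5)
Your proof is correct and follows the same route as the paper. The paper gives no separate proof for this proposition, but it proves the $\mathcal A_S$ version by exactly this comparison of the coefficients of each shift, splitting into the cases $i\in\{p,q\}$ and $i\notin\{p,q\}$. You go a little further than the paper by actually verifying the identity $w(x_i,x_p)w(x_i,x_q)=w(x_p,x_q)w(x_i,x_p)+w(x_q,x_p)w(x_i,x_q)-t$ through residues and the limit at infinity, which the paper only asserts. You also rightly make explicit that $A_{[N-1]}$ must be read as $A_{[N+1]\setminus\{p,q\}}$ when $p,q$ are general.
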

It is also possible to extend the operator $A$ to other set of variable indices such as $\mathcal S$. Define an operator $\mathcal A$ acting on the function of variables $\{x_\beta: \beta \in S \subseteq \mathcal S\} \cup \{x_0\}$ as
\[
\mathcal A_{S}:=\sum_{\alpha \in S} \prod_{\substack{\beta \in S \cup \{0\} \\ \beta \neq \alpha}} w(x_\alpha, x_\beta) \Gamma^2_\alpha.
\]
where for $\alpha =(i,\epsilon) \in S$, we have $\Gamma^2_\alpha =\Gamma^{2\epsilon}_i$ and
\[
(\Gamma_\alpha^2 f)(x_1, \dots, x_N)
= f(x_1, \dots, x_{i-1}, q^{2\epsilon}x_i, x_{i+1}, \dots, x_N).
\]
Then, one can check that $\mathcal A_S$ satisfies the recurrence relation similar to $A_{[N]}$ for $K = \{\alpha_1, \alpha_2\}\subset S$:
\begin{prop}
\begin{equation}
    \label{eq: A_S recur}
    \mathcal A_S = w(x_{\alpha_2},x_{\alpha_1})\,\mathcal A_{S\setminus\{\alpha_1\}} + w(x_{\alpha_1},x_{\alpha_2})\,\mathcal A_{S\setminus\{\alpha_2\}} - t\,\mathcal A_{S\setminus\{\alpha_1,\alpha_2\}}.
\end{equation}
\end{prop}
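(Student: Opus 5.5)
The plan is a coefficient-by-coefficient comparison. Both sides of \eqref{eq: A_S recur} are finite sums of shift operators $\Gamma^2_\alpha$, $\alpha\in S$, with coefficients that are rational functions of the $x$'s. The operators $\Gamma_i^{2\epsilon}$ for distinct $(i,\epsilon)$ are distinct shifts, so they are linearly independent over rational functions. It therefore suffices to show that, for each fixed $\alpha\in S$, the coefficient of $\Gamma^2_\alpha$ on the left equals the coefficient on the right. Throughout I treat the $x_\beta$, $\beta\in S\cup\{0\}$, as formal independent symbols. This is legitimate because the coefficients involve only $w(x_\alpha,x_\beta)$, and any identity among rational functions in independent symbols remains true after the specialization $x_{(i,-1)}=x_{(i,+1)}^{-1}$, $x_0=1$, provided no denominator vanishes identically. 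Note that $0$ lies in every index set $S\cup\{0\}$ and is never removed, so it behaves like an ordinary spectator variable.

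Write $P_\alpha(T):=\prod_{\beta\in T\cup\{0\},\,\beta\neq\alpha} w(x_\alpha,x_\beta)$. The argument splits into two cases.

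First, suppose $\alpha\notin\{\alpha_1,\alpha_2\}$. Then $\Gamma^2_\alpha$ appears in all four operators, and $P_\alpha(S)=w(x_\alpha,x_{\alpha_1})w(x_\alpha,x_{\alpha_2})P_\alpha(S\setminus\{\alpha_1,\alpha_2\})$. Dividing by the common factor $P_\alpha(S\setminus\{\alpha_1,\alpha_2\})$ and writing $a=x_\alpha$, $b=x_{\alpha_1}$, $c=x_{\alpha_2}$, the claim reduces to the three-variable identity
\[
w(a,b)w(a,c)=w(c,b)w(a,c)+w(b,c)w(a,b)-t.
\]
This is exactly the coefficient identity behind the type $A$ recurrence \eqref{eq: type A level 1 recur in omega}, applied with $N+1=3$ to the coefficient of $\Gamma_a$. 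One can also check it directly: clear the denominators $(a-b)(a-c)(b-c)$ and compare the resulting polynomials, using $w(b,c)+w(c,b)=1+t$.

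Second, suppose $\alpha=\alpha_1$; the case $\alpha=\alpha_2$ is symmetric. Now $\Gamma^2_{\alpha_1}$ appears only on the left and in $\mathcal A_{S\setminus\{\alpha_2\}}$. We have $P_{\alpha_1}(S)=w(x_{\alpha_1},x_{\alpha_2})P_{\alpha_1}(S\setminus\{\alpha_2\})$, and this matches the right-hand coefficient $w(x_{\alpha_1},x_{\alpha_2})P_{\alpha_1}(S\setminus\{\alpha_2\})$ exactly.

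The only step of real content is the three-variable rational identity in the first case. I would prove it by reducing to the already stated type $A$ proposition for three variables, which avoids a fresh computation. Alternatively, a short residue check works: the difference of the two sides, viewed as a function of $a$, has no poles at $a=b$ or $a=c$, it is bounded at infinity, and it vanishes at a convenient point. Once that identity is in hand, the linear independence of the shifts finishes the proof.
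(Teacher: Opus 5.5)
Your proposal is correct and follows the same route as the paper. You compare coefficients of each $\Gamma^2_\alpha$: the case $\alpha\in\{\alpha_1,\alpha_2\}$ is immediate, and the case $\alpha\notin\{\alpha_1,\alpha_2\}$ reduces, after factoring out the common product, to the three-variable identity $w(a,b)w(a,c)=w(c,b)w(a,c)+w(b,c)w(a,b)-t$. The paper leaves that identity implicit, so your explicit verification of it (via the three-variable type $A$ recurrence, or the pole/boundedness argument, where $a=0$ gives $1-(1+t)+t=0$) adds to the paper's argument rather than departing from it.
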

\begin{proof}
    Compare the coefficient of each $q$-shift $\Gamma_\alpha$.
    If $\alpha=\alpha_1$, then only the term $A_{S \setminus\{\alpha_2\}}$ on the right contains
  $\Gamma_{\alpha_1}$. The corresponding coefficient is
  $$w(x_{\alpha_1},x_{\alpha_2})\prod_{\substack{\beta \in S\setminus\{\alpha_1,\alpha_2\}}}w(x_{\alpha_1},x_\beta)
  =\prod_{\substack{\beta \in S\setminus\{\alpha_1\}}}w(x_{\alpha_1},x_\beta),$$
  which is exactly the coefficient of $\Gamma_{\alpha_1}$ in $\mathcal A_S$. The case $\alpha=\alpha_2$ is the
  same.
    If $\alpha \notin K$, then all three terms on the right contain $\Gamma_\alpha$, and
  the coefficient is
  $$
  \begin{aligned}
  &\quad w(x_{\alpha_2},x_{\alpha_1})\prod_{\substack{\beta \in S\setminus\{\alpha,\alpha_1\}}}w(x_\alpha,x_\beta)
  +w(x_{\alpha_1},x_{\alpha_2})\prod_{\substack{\beta \in S\setminus\{\alpha,\alpha_2\}}}w(x_\alpha,x_\beta)
  -t\prod_{\substack{\beta \in S\setminus\{\alpha,\alpha_1,\alpha_2\}}}w(x_\alpha,x_\beta)\\
  &= \Big(w(x_{\alpha_2},x_{\alpha_1})w(x_{\alpha},x_{\alpha_2})+w(x_{\alpha_1},x_{\alpha_2})w(x_{\alpha},x_{\alpha_1})-t\Big)
  \prod_{\substack{\beta \in S\setminus\{\alpha,\alpha_1,\alpha_2\}}}w(x_\alpha,x_\beta),
  \end{aligned}
  $$
  which is the coefficient of $\Gamma_\alpha$ in $\mathcal A_S$.
\end{proof}

\begin{lemma}
    \label{lemma: general set recursion}
    Let $K = \{\alpha_1, \alpha_2\} \subset S$. Then, $\mathcal E^B(S)$ satisfies the same recurrence relation as $\mathcal A_S$: 
    $$\displaystyle
\mathcal E^B(S)
=
w(x_{\alpha_2},x_{\alpha_1})\,\mathcal E^B(S\setminus\{\alpha_1\})
+
w(x_{\alpha_1},x_{\alpha_2})\,\mathcal E^B(S\setminus\{\alpha_2\})
-\;t\,\mathcal E^B(S\setminus\{\alpha_1,\alpha_2\}).
$$        
\end{lemma}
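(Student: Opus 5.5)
The plan is to reduce the lemma to the proposition for $\mathcal A_S$ by comparing coefficients of each $q$-shift, exactly as in the proof of \eqref{eq: A_S recur}. The key observation is that, for $\alpha=(i,\epsilon)\in S$, the coefficient of $\Gamma_\alpha^2=\Gamma_i^{2\epsilon}$ in $\mathcal E^B(S)$ is
\[
c_\alpha\prod_{\substack{\beta\in S\cup\{0\}\\ \beta\neq\alpha}} w(x_\alpha,x_\beta),\qquad c_\alpha:=\frac{1}{w(x_i^\epsilon,x_i^{-\epsilon})}.
\]
This is $c_\alpha$ times the coefficient of $\Gamma_\alpha^2$ in $\mathcal A_S$. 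The prefactor $c_\alpha$ depends only on $\alpha$, and not on the ambient set $S$. It is present even when the partner $(i,-\epsilon)$ is not in $S$.

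First I would note that the shifts $\Gamma_i^{2\epsilon}$ for distinct $\alpha=(i,\epsilon)$ are distinct operators. I would also note that all coefficients on both sides of the claimed identity multiply on the left, before any shift acts. This includes the scalar prefactors $w(x_{\alpha_2},x_{\alpha_1})$, $w(x_{\alpha_1},x_{\alpha_2})$ and $-t$. Hence the identity is equivalent to an identity of coefficients of $\Gamma_\alpha^2$ for each $\alpha\in S$. Each such identity is linear in the coefficient functions.

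Second, I would fix $\alpha\in S$ and write the coefficient of $\Gamma_\alpha^2$ on each side. By the observation above, every term carries the common factor $c_\alpha$. This holds for $\mathcal E^B(S)$, $\mathcal E^B(S\setminus\{\alpha_1\})$, $\mathcal E^B(S\setminus\{\alpha_2\})$ and $\mathcal E^B(S\setminus\{\alpha_1,\alpha_2\})$, whenever $\alpha$ belongs to the relevant set. Dividing by $c_\alpha$ leaves precisely the coefficient identity already verified in the proof of \eqref{eq: A_S recur}. There are two cases. If $\alpha\in K$, say $\alpha=\alpha_1$, only the $S\setminus\{\alpha_2\}$ term contributes, and the factor $w(x_{\alpha_1},x_{\alpha_2})$ completes the product. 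If $\alpha\notin K$, the claim reduces to the rational identity
\[
w(x_{\alpha_2},x_{\alpha_1})w(x_\alpha,x_{\alpha_2})+w(x_{\alpha_1},x_{\alpha_2})w(x_\alpha,x_{\alpha_1})-t=w(x_\alpha,x_{\alpha_1})w(x_\alpha,x_{\alpha_2}).
\]

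The only point needing care, which I expect to be the main obstacle, is the justification that this three-variable identity holds for arbitrary $x_\alpha,x_{\alpha_1},x_{\alpha_2}$. I would check it directly by clearing denominators, viewing it as a polynomial identity in three independent variables $a,b,c$. It therefore remains valid after specializing to $x_i^{\pm1}$, including the case where $\alpha_1$ or $\alpha_2$ is the partner $(i,-\epsilon)$ of $\alpha$. It also remains valid if $x_0=1$ were involved. Note that $0\notin K$, since $K\subset S$, so $x_0$ only appears in the untouched product. With this, the coefficients match for every $\alpha$, and the lemma follows.
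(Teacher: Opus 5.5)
Your proposal is correct and follows essentially the same route as the paper: you compare the coefficients of each shift $\Gamma_i^{2\epsilon}$ and reduce to the recurrence for $\mathcal A_S$, with the prefactor $1/w(x_i^\epsilon,x_i^{-\epsilon})$ factoring out of every coefficient identity. Your version is more explicit than the paper's one-line argument. You say why the prefactor is harmless: it depends only on $\alpha$, not on the ambient set, even after the partner $(i,-\epsilon)$ has been removed. You also state the three-variable rational identity that the proof for $\mathcal A_S$ leaves implicit.
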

\begin{proof}
    The operators $\mathcal E^B(S)$ and $\mathcal A_S$ are only different by terms involving the null variable $x_0$ and the prefactor $\displaystyle \frac{1}{w(x_i^\epsilon, x_i^{-\epsilon})}$. Passing to the recurrence relation, these differences do not affect the coefficients of the shifts $\Gamma_\alpha$ for $\alpha \in S$, and hence $\mathcal E^B(S)$ satisfies the same recurrence relation as $\mathcal A_S$.
\end{proof}
The following theorem is then immediate from the Lemma \ref{lemma: general set recursion} by taking $S=\mathcal S$ and $K=\{(p,+1),(p,-1)\}$ for all $p \in [N]$.

\begin{thm}
    \label{thm: Rank recursion for type B macdo}
    Type-$B_N$ Macdonald operator satisfies the rank recursion relation: 
    {\footnotesize
    \begin{equation}
        \label{eq: type B rank recurrence}
        \begin{aligned}
        \mathcal E_1^{(B_N)}
        &= w(x_p^{-1},x_p)\,\mathcal E^B\!\big(\mathcal S\setminus\{(p,+1)\}\big) + w(x_p,x_p^{-1})\,\mathcal E^B\!\big(\mathcal S\setminus\{(p,-1)\}\big) - t\,\mathcal E^B\!\big(\mathcal S\setminus\{(p,+1),(p,-1)\}\big) \\
        &= w(x_p^{-1},x_p)\,\mathcal E^B\!\big(\mathcal S\setminus\{(p,+1)\}\big)  + w(x_p,x_p^{-1})\,\mathcal E^B\!\big(\mathcal S\setminus\{(p,-1)\}\big)  - t\,\mathcal E^{B_{N-1}}\!\big(\widehat{x}_p\big),
        \end{aligned}
    \end{equation}
    }
    where $\widehat{x}_p = (x_1, \dots, x_{p-1}, x_{p+1}, \dots, x_N).$
\end{thm}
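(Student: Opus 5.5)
The plan is to apply Lemma \ref{lemma: general set recursion} with $S=\mathcal S$ and $K=\{\alpha_1,\alpha_2\}=\{(p,+1),(p,-1)\}$, and then identify each of the three resulting operators. By Corollary \ref{cor:B_N macdo rewrite}, the left-hand side $\mathcal E^B(\mathcal S)$ equals $\mathcal E^{(B_N)}_1$. With $x_{\alpha_1}=x_p$ and $x_{\alpha_2}=x_p^{-1}$, the lemma gives prefactors $w(x_{\alpha_2},x_{\alpha_1})=w(x_p^{-1},x_p)$ in front of $\mathcal E^B(\mathcal S\setminus\{(p,+1)\})$ and $w(x_{\alpha_1},x_{\alpha_2})=w(x_p,x_p^{-1})$ in front of $\mathcal E^B(\mathcal S\setminus\{(p,-1)\})$. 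This is exactly the first line of \eqref{eq: type B rank recurrence}.

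Because I do not want to lean only on the brief argument in the lemma, I will re-verify the coefficient comparison directly, since the prefactor $1/w(x_i^\epsilon,x_i^{-\epsilon})$ is the only new feature. Fix a shift $\Gamma_i^{2\epsilon}$.

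If $(i,\epsilon)=(p,+1)$, only $\mathcal E^B(\mathcal S\setminus\{(p,-1)\})$ contributes. Its coefficient is $w(x_p,x_p^{-1})\cdot\frac{1}{w(x_p,x_p^{-1})}\prod_{\beta\neq (p,\pm1)}w(x_p,x_\beta)$. This matches the coefficient in $\mathcal E^B(\mathcal S)$, namely $\frac{1}{w(x_p,x_p^{-1})}\prod_{\beta\neq(p,+1)}w(x_p,x_\beta)$, because that product contains the factor $w(x_p,x_p^{-1})$. The case $(p,-1)$ is symmetric.

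If $i\neq p$, all three terms contain $\Gamma_i^{2\epsilon}$, each with the same prefactor $1/w(x_i^\epsilon,x_i^{-\epsilon})$ and the common factor $\prod_{\beta\notin\{(i,\epsilon),(p,\pm1)\}}w(x_i^\epsilon,x_\beta)$. The common factor includes $\beta=0$ and $\beta=(i,-\epsilon)$. After factoring these out, what remains is the three-term identity
\[
w(y,x)w(z,y)+w(x,y)w(z,x)-t=w(z,x)w(z,y),\qquad x=x_p,\ y=x_p^{-1},\ z=x_i^\epsilon .
\]
This is the same rational identity already used in the proof of \eqref{eq: A_S recur}. It holds for arbitrary distinct $x,y,z$, and it can be confirmed by clearing denominators in $(x-y)(z-x)(z-y)$. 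The specialization $y=x^{-1}$ is harmless since the identity is generic.

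For the second line, I will show $\mathcal E^B(\mathcal S\setminus\{(p,\pm1)\})=\mathcal E^{(B_{N-1})}_1(\widehat x_p)$. The set $\mathcal S\setminus\{(p,+1),(p,-1)\}$ is precisely the index set $\mathcal S$ built from the variables $\widehat x_p$, and the definition \eqref{eq:typeB_aux} involves only variables indexed by $S\cup\{0\}$. So Corollary \ref{cor:B_N macdo rewrite} in rank $N-1$ gives the claim directly.

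The main obstacle is the rational identity in the case $i\neq p$. I expect it to be routine once denominators are cleared, but it must be checked at the coincidence $y=x^{-1}$, where poles at $x_p=\pm1$ require treating the identity generically in $x_p$.
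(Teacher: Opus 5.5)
Your proposal is correct and follows the same route as the paper. The paper derives the theorem directly from Lemma~\ref{lemma: general set recursion} with $S=\mathcal S$ and $K=\{(p,+1),(p,-1)\}$, and identifies $\mathcal E^B(\mathcal S\setminus\{(p,\pm1)\})$ with the rank-$(N-1)$ operator via Corollary~\ref{cor:B_N macdo rewrite}. Your explicit coefficient check, which uses the three-term identity for $i\neq p$ and notes that the common prefactor $1/w(x_i^\epsilon,x_i^{-\epsilon})$ causes no trouble, makes precise what the paper's brief proof of that lemma only asserts.
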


\begin{remark}
    From Theorem \ref{thm: Rank recursion for type B macdo}, the rank recursion for $\mathcal E_1^{(D_N)}$ is then straightforward: 
    {\footnotesize
    \begin{equation}
        \label{eq: type D rank recurrence}
        \begin{aligned}
        \mathcal E_1^{(D_N)}
        &= w(x_p^{-1},x_p)\,\mathcal E^D\!\big(\mathcal S\setminus\{(p,+1)\}\big) + w(x_p,x_p^{-1})\,\mathcal E^D\!\big(\mathcal S\setminus\{(p,-1)\}\big)- t\,\mathcal E^D\!\big(\mathcal S\setminus\{(p,+1),(p,-1)\}\big) \\
        &= w(x_p^{-1},x_p)\,\mathcal E^D\!\big(\mathcal S\setminus\{(p,+1)\}\big)  + w(x_p,x_p^{-1})\,\mathcal E^D\!\big(\mathcal S\setminus\{(p,-1)\}\big)  - t\,\mathcal E^{(D_{N-1})}\!\big(\widehat{x}_p\big),
        \end{aligned}
    \end{equation}
    }
    where: 
\begin{equation}
    \label{eq:typeD_aux}
    \mathcal E^D(S) =\sum_{(i,\epsilon)\in S}
\frac{1}{w(x_i^\epsilon,x_i^{-\epsilon})}
\prod_{\substack{\beta\in S\\ \beta\neq(i,\epsilon)}}
w(x_i^\epsilon,x_{\beta})\;\Gamma_i^{2\epsilon}.
\end{equation}
The only difference between type $B$ and type $D$ in this context is the presence of the null variable $x_0$ in type $B$ coming from the short root, which is absent in type $D$.
\end{remark}
\section{\texorpdfstring{$\mathcal E^{(C_N)}_N$, $\mathcal E^{(D_N)}_N$ and $\mathcal E^{(D_N)}_{N-1}$ Decomposition}{EN of CN, EN of DN and E N-minus-1 of DN decomposition}}
\subsection{\texorpdfstring{The Case of $\mathcal E^{(C_N)}_N$}{The case of EN of CN}}
For a finite set $V \subseteq [N]$, define the \emph{signed complement set} $ \overline{V} := \{(i,\epsilon_i)\mid i \notin V, \epsilon_i \in \{\pm 1\}\}$. Then the auxiliary operator in the case of type $C$ Macdonald operator is given by
\begin{equation}
    \label{eq: type C auxiliary op}
    \begin{aligned}
    \mathcal E^C(V;\oV)
    &=\sum_{\substack{\epsilon_i, i \in V\\ \epsilon_i = \pm 1}}
    \left(\prod_{i \in V}\frac{1-tx_i^{2\epsilon_i}}{1-x_i^{2\epsilon_i}}\right)
    \left(\prod_{\substack{i<j,\\i,j \in V}}\frac{1-tx_i^{\epsilon_i}x_j^{\epsilon_j}}{1-x_i^{\epsilon_i}x_j^{\epsilon_j}}\right)
    \left(\prod_{\substack{i \in V,\\(k,\epsilon_k) \in \oV}}\frac{1-tx_i^{\epsilon_i}x_k^{\epsilon_k}}{1-x_i^{\epsilon_i}x_k^{\epsilon_k}}\right)
    \left[\prod_{i \in V}\Gamma_i^{\epsilon_i}\right]
    \end{aligned}
\end{equation}
Comparing this operator with type-$C_N$ Macdonald operator, it is a stratight-forward check that $\mathcal E_N^{(C_N)} = \mathcal E^C([N];\varnothing)$, which is nothing but removing the factor $\displaystyle \prod_{\substack{i\in V\\(k,\epsilon_k)\in \overline V}}\frac{1-tx_i^{\epsilon_i}x_k^{\epsilon_k}}{1-x_i^{\epsilon_i}x_k^{\epsilon_k}}$. 
\begin{lemma}
    \label{lemma:typeC_auxiliary_variable_removal}
    For any finite set $V\subseteq [N]$ and any $p\in V$,
% any slot collection $(\beta,V_{\beta})_{\beta\in B}$, and any $p\in V$,
% $$
% E^C\bigl(V;(\beta,V_{\beta})_{\beta\in B}\bigr)
% =
% \sum_{\sigma=\pm1}
% c(x_p^{2\sigma})
% \Bigl[\prod_{\substack{\beta\in B\\ p\in V_{\beta}}}c(x_p^{\sigma}z_{\beta})\Bigr]
% E^C\!\bigl(V\setminus\{p\};
% (\beta,V_{\beta}\setminus\{p\})_{\beta\in B}\cup((p,\sigma),V\setminus\{p\})\bigr)
% \,\Gamma_p^{\sigma}.
% $$
% In particular, for the full-coupling operator,
$$
E^C(V;\overline V)
=
\sum_{\epsilon_p=\pm1}
\frac{1-tx_p^{2\epsilon_p}}{1-x_p^{2\epsilon_p}}
\left(\prod_{\substack{i \in V\\(k,\epsilon_k) \in \overline V}} \frac{1-tx_i^{\epsilon_i}x_k^{\epsilon_k}}{1-x_i^{\epsilon_i}x_k^{\epsilon_k}}\right)
E^C\!\bigl(V\setminus\{p\};\overline V\cup\{(p,\epsilon_p)\}\bigr)\,\Gamma_p^{\epsilon_p}.
$$
\end{lemma}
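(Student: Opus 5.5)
The plan is a direct term-by-term regrouping of the defining sum \eqref{eq: type C auxiliary op}, splitting off the summation variable $\epsilon_p$. Throughout, I read the second argument of $\mathcal E^C(\,\cdot\,;\,\cdot\,)$ as a fixed collection of signed indices $(k,\epsilon_k)$ with $k\notin V$; their signs are not summed over. I also read the extracted coupling factor in the statement as the part involving the removed index $p$, namely
$$
\prod_{(k,\epsilon_k)\in\overline V}\frac{1-tx_p^{\epsilon_p}x_k^{\epsilon_k}}{1-x_p^{\epsilon_p}x_k^{\epsilon_k}}.
$$
The couplings of the remaining $i\in V\setminus\{p\}$ with $\overline V$ stay inside the lower auxiliary operator. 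Fixing this interpretation is the first step, and I expect it to be the main obstacle. The rest is bookkeeping, but only once it is clear which factors are supposed to sit outside $\mathcal E^C(V\setminus\{p\};\overline V\cup\{(p,\epsilon_p)\})$.

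\textbf{Splitting the sum and the four products.} Write the sum over $(\epsilon_i)_{i\in V}$ as an outer sum over $\epsilon_p=\pm1$ and an inner sum over $(\epsilon_i)_{i\in V\setminus\{p\}}$. Then split each of the four products in \eqref{eq: type C auxiliary op} according to whether $p$ occurs.
\begin{enumerate}
\item The single-variable product gives $\frac{1-tx_p^{2\epsilon_p}}{1-x_p^{2\epsilon_p}}$ times the same product over $V\setminus\{p\}$.
\item The pair product over $i<j$ in $V$ gives the pair product over $V\setminus\{p\}$ times $\prod_{i\in V\setminus\{p\}}\frac{1-tx_i^{\epsilon_i}x_p^{\epsilon_p}}{1-x_i^{\epsilon_i}x_p^{\epsilon_p}}$. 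This uses that each factor is symmetric under swapping $(i,\epsilon_i)\leftrightarrow(j,\epsilon_j)$, so the ordering $i<p$ versus $p<j$ is irrelevant. This new product is exactly the extra coupling of $V\setminus\{p\}$ with the added signed index $(p,\epsilon_p)$ in $\overline V\cup\{(p,\epsilon_p)\}$.
\item The coupling product over $i\in V$ and $(k,\epsilon_k)\in\overline V$ gives the coupling product over $V\setminus\{p\}$ times the $p$-part displayed above.
\item The shift factor gives $\prod_{i\in V\setminus\{p\}}\Gamma_i^{\epsilon_i}\cdot\Gamma_p^{\epsilon_p}$, since the $\Gamma_i$ commute.
\end{enumerate}

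\textbf{Reassembling.} After the split, the inner sum consists of the single-variable product over $V\setminus\{p\}$, the pair product over $V\setminus\{p\}$, and the coupling of $V\setminus\{p\}$ with $\overline V\cup\{(p,\epsilon_p)\}$. Together with $\prod_{i\in V\setminus\{p\}}\Gamma_i^{\epsilon_i}$, this is by definition $\mathcal E^C(V\setminus\{p\};\overline V\cup\{(p,\epsilon_p)\})$.

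All coefficient functions stand to the left of the shifts, and $\Gamma_p^{\epsilon_p}$ is placed at the far right. Hence the coefficients may depend on $x_p$ without any commutation correction. The leftover scalar factors, $\frac{1-tx_p^{2\epsilon_p}}{1-x_p^{2\epsilon_p}}$ and the $p$-coupling with $\overline V$, depend only on $\epsilon_p$ and the fixed data, so they can be pulled out of the inner sum. This yields the claimed identity.

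As a consistency check, specializing to $V=[N]$, $\overline V=\varnothing$ makes the extracted coupling factor trivial. The identity then reproduces the stated decomposition of $\mathcal E_N^{(C_N)}$ from the introduction.
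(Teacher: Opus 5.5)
Your proposal is correct and is essentially the paper's own argument: the paper fixes the signs and compares the coefficient of each shift monomial $\prod_{i\in V}\Gamma_i^{\epsilon_i}$. It splits off the $x_p$-dependent factors exactly as in your items (1)--(3) and recognizes the remainder $L'$, augmented by the coupling with $(p,\epsilon_p)$, as the coefficient in $\mathcal E^C(V\setminus\{p\};\overline V\cup\{(p,\epsilon_p)\})$. Your reading of the extracted prefactor as only the $p$-part $\prod_{(k,\epsilon_k)\in\overline V}\frac{1-tx_p^{\epsilon_p}x_k^{\epsilon_k}}{1-x_p^{\epsilon_p}x_k^{\epsilon_k}}$ is the right one, and it is the factor the paper's own factorization of $L$ actually uses. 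Read literally, the product over all $i\in V$ would leave $\epsilon_i$ for $i\neq p$ unbound and would double-count the $(V\setminus\{p\})$--$\overline V$ coupling already contained in the inner operator.
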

\begin{proof}
Fixing some $(\epsilon_1, \ldots, \epsilon_N)\in \{\pm 1\}^N$ and considering the coefficient of the monomial $\displaystyle \prod_{i \in V}\Gamma_i^{\epsilon_i}$ on the l.h.s, we separate the component corresponding to the variable $x_p$ for any $p \in V$: 
\begin{equation}
    \label{eq: LHS_remove_typeC}
    \begin{aligned}
    L\left(\prod_{i \in V}\Gamma_i^{\epsilon_i}\right)  &=\prod_{i\in V}\frac{1-tx_i^{2\epsilon_i}}{1-x_i^{2\epsilon_i}}\prod_{\substack{i<j\\ i,j\in V}}\frac{1-tx_i^{\epsilon_i}x_j^{\epsilon_j}}{1-x_i^{\epsilon_i}x_j^{\epsilon_j}}\prod_{\substack{i \in V\\(k,\epsilon_k) \in \overline V}}\frac{1-tx_i^{\epsilon_i}x_k^{\epsilon_k}}{1-x_i^{\epsilon_i}x_k^{\epsilon_k}}\\
&=\frac{1-tx_p^{2\epsilon_p}}{1-x_p^{2\epsilon_p}}\left(\prod_{j\in V\setminus\{p\}}\frac{1-tx_p^{\epsilon_p}x_j^{\epsilon_j}}{1-x_p^{\epsilon_p}x_j^{\epsilon_j}}\right)\left(\prod_{\substack{(k,\epsilon_k) \in \overline V}} \frac{1-tx_p^{\epsilon_p}x_k^{\epsilon_k}}{1-x_p^{\epsilon_p}x_k^{\epsilon_k}}\right)L'.
    \end{aligned}
\end{equation}
where $\displaystyle L' = \prod_{i\in V\setminus\{p\}}\frac{1-tx_i^{2\epsilon_i}}{1-x_i^{2\epsilon_i}}\prod_{\substack{i<j\\ i,j\in V\setminus\{p\}}}\frac{1-tx_i^{\epsilon_i}x_j^{\epsilon_j}}{1-x_i^{\epsilon_i}x_j^{\epsilon_j}}\prod_{\substack{i \in V\setminus\{p\}\\(k,\epsilon_k) \in \overline V}}\frac{1-tx_i^{\epsilon_i}x_k^{\epsilon_k}}{1-x_i^{\epsilon_i}x_k^{\epsilon_k}}$. Moving to the r.h.s., the auxiliary operator $\mathcal E^C\!\bigl(V\setminus\{p\};\overline V\cup\{(p,\epsilon_p)\}\bigr)$ can only contain the monomial $\displaystyle \prod_{i \in V\setminus\{p\}}\Gamma_i^{\epsilon_i}$ with the corresponding coefficient
\[R'=L'\prod_{j \in V\setminus\{p\}}\frac{1-tx_p^{\epsilon_p}x_j^{\epsilon_j}}{1-x_p^{\epsilon_p}x_j^{\epsilon_j}}\]
from the added factors corresponding to $\overline V\cup\{(p,\epsilon_p)\}$. Then, combining with the prefactors $\displaystyle \frac{1-tx_p^{2\epsilon_p}}{1-x_p^{2\epsilon_p}} \left(\prod_{\substack{i \in V\\(k,\delta) \in \overline V}} \frac{1-tx_i^{\epsilon_i}x_k^{\delta}}{1-x_i^{\epsilon_i}x_k^{\delta}}\right)$ and $\Gamma_p^{\epsilon_p}$ we obtain exactly $\displaystyle L\left(\prod_{i \in V}\Gamma_i^{\epsilon_i}\right)$ and completes the proof.
\end{proof}

Applying Lemma \ref{lemma:typeC_auxiliary_variable_removal} we have the decomposition of the type-$C_N$ Macdonald operator in terms of the auxiliary operator:

\begin{thm}
\label{thm:typeC_macdo_recur}
$$
\mathcal E_N^{(C_N)}
=
\sum_{\sigma=\pm1}
\left(\frac{1-tx_p^{2\sigma}}{1-x_p^{2\sigma}}\right)
\mathcal E^C\!\bigl([N]\setminus\{p\};\{(p,\sigma)\}\bigr)\,\Gamma_p^{\sigma},
$$
\end{thm}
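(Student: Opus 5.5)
We obtain the statement by specializing Lemma \ref{lemma:typeC_auxiliary_variable_removal} to $V=[N]$. First we use the observation made right after \eqref{eq: type C auxiliary op}: when $V=[N]$ the signed complement $\overline V$ is empty, the coupling factor $\prod_{i\in V,(k,\epsilon_k)\in\overline V}(\cdots)$ is an empty product equal to $1$, and therefore $\mathcal E_N^{(C_N)}=\mathcal E^C([N];\varnothing)$. Applying the lemma with $V=[N]$ and an arbitrary $p\in[N]$, the prefactor $\prod_{i\in V,(k,\epsilon_k)\in\overline V}$ is again empty. This gives
\[
\mathcal E^C([N];\varnothing)=\sum_{\sigma=\pm1}\frac{1-tx_p^{2\sigma}}{1-x_p^{2\sigma}}\,\mathcal E^C\bigl([N]\setminus\{p\};\{(p,\sigma)\}\bigr)\,\Gamma_p^{\sigma},
\]
which is exactly the claimed identity once $\epsilon_p$ is renamed $\sigma$.

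To guard against subtleties in how the lemma was stated, we would also verify the identity directly by comparing coefficients of each shift monomial $\prod_{i=1}^N\Gamma_i^{\epsilon_i}$. On the left, this coefficient is the full $C_N$ product from \eqref{eq:type_CN_macdo}. On the right, only the summand with $\sigma=\epsilon_p$ contributes. Inside $\mathcal E^C([N]\setminus\{p\};\{(p,\epsilon_p)\})$, the relevant coefficient is the product of three pieces:
\begin{itemize}
\item the single-variable factors $\frac{1-tx_i^{2\epsilon_i}}{1-x_i^{2\epsilon_i}}$ for $i\neq p$;
\item the pair factors for pairs $i<j$ with $i,j\neq p$;
\item the coupling factors $\frac{1-tx_i^{\epsilon_i}x_p^{\epsilon_p}}{1-x_i^{\epsilon_i}x_p^{\epsilon_p}}$ for $i\neq p$.
\end{itemize}
Multiplying by the prefactor $\frac{1-tx_p^{2\epsilon_p}}{1-x_p^{2\epsilon_p}}$ recovers every factor of the $C_N$ product, because the pair factor is symmetric in its two indices. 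The placement of $\Gamma_p^{\sigma}$ to the right matters here: the coefficient functions are never shifted by $\Gamma_p$, so no $q$-shift of $x_p$ enters the coefficients.

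The only real obstacle is bookkeeping. Each pair $\{i,p\}$ must be counted exactly once, through the coupling term, and the ordering convention $i<j$ must not matter. The second point holds because $\frac{1-tx_i^{\epsilon_i}x_j^{\epsilon_j}}{1-x_i^{\epsilon_i}x_j^{\epsilon_j}}$ is symmetric under swapping $i$ and $j$.
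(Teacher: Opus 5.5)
Your proposal is correct and matches the paper, which simply specializes Lemma~\ref{lemma:typeC_auxiliary_variable_removal} to $V=[N]$, $\overline V=\varnothing$; your direct coefficient comparison is the lemma's own proof run in this special case. That extra check is worthwhile: for general $\overline V$, the lemma's displayed prefactor (a product over all $i\in V$) would double-count the couplings of $V\setminus\{p\}$ with $\overline V$ and should run only over $i=p$, but this is harmless here because $\overline V$ is empty.
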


Let us do an example in the case of $C_2$.
$$
\mathcal E_N^{(C_2)}
=
\sum_{\epsilon_1,\epsilon_2=\pm1}
\frac{1-tx_1^{2\epsilon_1}}{1-x_1^{2\epsilon_1}}\,\frac{1-tx_2^{2\epsilon_2}}{1-x_2^{2\epsilon_2}}
\frac{1-tx_1^{\epsilon_1}x_2^{\epsilon_2}}{1-x_1^{\epsilon_1}x_2^{\epsilon_2}}
\Gamma_1^{\epsilon_1}\Gamma_2^{\epsilon_2}.
$$
Taking $p=1$, the auxiliary operator is
$$
\mathcal E^C(\{2\};\{(1,+1)\})
=\frac{1-tx_2^{2}}{1-x_2^{2}}\frac{1-tx_2x_1}{1-x_2x_1}\Gamma_2+
\frac{1-tx_2^{-2}}{1-x_2^{-2}}\,\frac{1-tx_2^{-1}x_1}{1-x_2^{-1}x_1}\Gamma_2^{-1}
$$
Then, multiplying on the left with $\displaystyle \frac{1-tx_1^2}{1-x_1^2}$ and on the right by $\Gamma_1$, we get two terms: 
\[\frac{1-tx_1^2}{1-x_1^2}\frac{1-tx_2^{2}}{1-x_2^{2}}\frac{1-tx_2x_1}{1-x_2x_1}\Gamma_2\Gamma_1+
\frac{1-tx_1^2}{1-x_1^2}\frac{1-tx_2^{-2}}{1-x_2^{-2}}\,\frac{1-tx_2^{-1}x_1}{1-x_2^{-1}x_1}\Gamma_2^{-1}\Gamma_1\]
Repeating the computation with $\mathcal E^C(\{2\};\{(1,-1)\})$ gives the rest of $\mathcal E_N^{(C_2)}$.

\subsection{Type D Decomposition}
Type $D$ auxiliary operators can be defined similarly to the type $C$ case. However, defining them this way is not particularly interesting. Considering two sets 
\begin{equation}
    \begin{aligned}
    \overline V_{\textrm{odd}} &:= \{(k,\epsilon_k)\mid k \notin V, \epsilon_k \in \{\pm 1\}, \quad  \prod_{\substack{j \in V\\k \notin V}} \epsilon_j\epsilon_k=-1\}, \\
    \overline V_{\textrm{even}} &:= \{(k,\epsilon_k)\mid k \notin V, \epsilon_k \in \{\pm 1\}, \quad \prod_{\substack{j \in V\\k \notin V}} \epsilon_j\epsilon_k=1\},
    \end{aligned}
\end{equation}
then we have type $D$ auxiliary operators for each of the cases $\mathcal E^{(D_N)}_N$ and $\mathcal E^{(D_N)}_{N-1}$ are, respectively, 
\begin{equation}
        \label{eq: type D auxiliary op}
    \begin{aligned}
    \mathcal E^D_{\textrm{odd}}(V;\oV_{\textrm{odd}})
    &=\sum_{\substack{\epsilon_i, i \in V\\ \epsilon_i = \pm 1}}
    \left(\prod_{\substack{i<j,\\i,j \in V}}\frac{1-tx_i^{\epsilon_i}x_j^{\epsilon_j}}{1-x_i^{\epsilon_i}x_j^{\epsilon_j}}\right)
    \left(\prod_{\substack{i \in V,\\(k,\epsilon_k) \in \oV_{\textrm{odd}}}}\frac{1-tx_i^{\epsilon_i}x_k^{\epsilon_k}}{1-x_i^{\epsilon_i}x_k^{\epsilon_k}}\right)
    \left[\prod_{i \in V}\Gamma_i^{\epsilon_i}\right],\\
    \mathcal E^D_{\textrm{even}}(V;\oV_{\textrm{even}})
    &=\sum_{\substack{\epsilon_i, i \in V\\ \epsilon_i = \pm 1}}
    \left(\prod_{\substack{i<j,\\i,j \in V}}\frac{1-tx_i^{\epsilon_i}x_j^{\epsilon_j}}{1-x_i^{\epsilon_i}x_j^{\epsilon_j}}\right)
    \left(\prod_{\substack{i \in V,\\(k,\epsilon_k) \in \oV_{\textrm{even}}}}\frac{1-tx_i^{\epsilon_i}x_k^{\epsilon_k}}{1-x_i^{\epsilon_i}x_k^{\epsilon_k}}\right)
    \left[\prod_{i \in V}\Gamma_i^{\epsilon_i}\right].
    \end{aligned}
\end{equation}
Then, it follows that $\mathcal E^{D_N}_N = \mathcal E^D_{\textrm{even}}([N];\varnothing)$ and $\mathcal E^{D_N}_{N-1} = \mathcal E^D_{\textrm{odd}}([N];\varnothing)$. However, because of the restriction on the parity of the product $\displaystyle \prod_{i\in V}\epsilon_i$, the analog of Lemma \ref{lemma:typeC_auxiliary_variable_removal} can only have one term where $\epsilon_p$ is totally determined so we would not advance further. 

\section{An Elliptic Generalization for Type A Macdonald Operator Decomposition}
In \cite{Ruijsenaars1987}, Ruijsenaars introduced a family of commuting difference operators which acts as Hamiltonian of the Ruijsenaars-Schneider model, a relativistic generalization of the Calogero-Moser system. For our presentation, we will use the notation in \cite{MZ22}, 
\begin{equation}
    \label{eq: elliptic_Macdo}
    D^a_N=\sum_{|I|=a} \prod_{\substack{i \in I \\ j \notin I}} \phi\left(z_j-z_i\right) \prod_{i \in I} p_i, \quad k=1, \ldots, N,
\end{equation}
where $\phi(z)=\phi(\hbar,z)$ is the elliptic Kronecker function defined as: 
\begin{equation}
    \label{eq: Elliptic Kronecker}
    \phi(z, u)=\frac{\vartheta^{\prime}(0) \vartheta(z+u)}{\vartheta(z) \vartheta(u)}=\phi(u, z).
\end{equation}
and $\vartheta(z)$ is the Jacobi theta function:
\begin{equation}
    \label{eq: Jacobi theta}
    \vartheta(z)=\vartheta(z \mid \tau)=-\sum_{k \in \mathbb{Z}} \exp \left(\pi \imath \tau\left(k+\frac{1}{2}\right)^2+2 \pi \imath\left(z+\frac{1}{2}\right)\left(k+\frac{1}{2}\right)\right), \quad \operatorname{Im}(\tau)>0.
\end{equation}
The shift operator in this case is $p_i$, which acts on a function of complex variables $f(z_1, \ldots, z_N)$ as:
\begin{equation}
    p_i f(z_1, \ldots, z_N) = \exp\left(\eta\frac{\partial}{\partial z_i}\right) f(z_1, \ldots, z_N)=f(z_1, \ldots, z_i + \eta, \ldots, z_N).
\end{equation}

The three-terms relation for $\mathcal E^{(B_N)}_1$ and $\mathcal M^r_N$ turns out to have an equivalent formulation in terms of the elliptic Macdonald operators $D^a_N$. Specifically, 
\begin{thm}
    \label{thm: Elliptic 3 terms recurrence}
    For every $N\ge 1$ the following recurrence holds:
\begin{equation}
    \begin{aligned}
D^1_{N+1}(z_1,\dots,z_{N+1})
&=\phi(z_{N+1}-z_N)D^1_N(z_1,\dots,z_N) + \phi(z_N-z_{N+1})D^1_N(z_1,\dots,z_{N-1},z_{N+1}) \\
&\quad - \mathcal E^{\textrm{ell}}_N,
\end{aligned}
\end{equation}
where $\mathcal E^{\textrm{ell}}_N$ is the elliptic auxiliary operator  given by:
{\footnotesize
\[
\begin{aligned}
\mathcal E^{\textrm{ell}}_N
&=\sum_{i=1}^{N-1}\Big(
\phi(z_{N+1}-z_N)\phi(z_N-z_i)
+\phi(z_N-z_{N+1})\phi(z_{N+1}-z_i)-\phi(z_N-z_i)\phi(z_{N+1}-z_i)
\Big)\prod_{\substack{j=1\\j\neq i}}^{N-1}\phi(z_j-z_i)\,p_i,
\end{aligned}
\]
}
with the convention that  $\mathcal E^{\textrm{ell}}_1=0$.
\end{thm}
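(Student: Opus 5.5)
The plan is to compare, as in the proof of Proposition \eqref{eq: A_S recur}, the coefficient of each single shift $p_i$, $i\in[N+1]$, on the two sides. This suffices because $D^1_{N+1}$, both copies of $D^1_N$, and $\mathcal E^{\textrm{ell}}_N$ are all first-order in the shifts: each is a sum $\sum_i c_i(z)\,p_i$, with coefficients depending only on the $z$'s. Write
\[
D^1_{N+1}=\sum_{i=1}^{N+1}\prod_{j\neq i}\phi(z_j-z_i)\,p_i ,
\]
and similarly for $D^1_N(z_1,\dots,z_N)$, whose shifts are $p_1,\dots,p_N$, and for $D^1_N(z_1,\dots,z_{N-1},z_{N+1})$, whose shifts are $p_1,\dots,p_{N-1},p_{N+1}$.

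First I treat the two ``boundary'' shifts. The shift $p_{N+1}$ appears on the right only in the second term. Its coefficient there is
\[
\phi(z_N-z_{N+1})\prod_{j\le N-1}\phi(z_j-z_{N+1})=\prod_{j\le N}\phi(z_j-z_{N+1}),
\]
which is the coefficient of $p_{N+1}$ in $D^1_{N+1}$. Symmetrically, $p_N$ appears on the right only in the first term, with coefficient $\phi(z_{N+1}-z_N)\prod_{j\le N-1}\phi(z_j-z_N)$, matching the left side. Neither shift occurs in $\mathcal E^{\textrm{ell}}_N$, since its sum runs over $i\le N-1$.

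Next I treat an interior shift $p_i$ with $i\le N-1$. Set $P_i:=\prod_{j\le N-1,\,j\neq i}\phi(z_j-z_i)$. The left side gives $\phi(z_N-z_i)\phi(z_{N+1}-z_i)P_i$. The right side gives
\[
\big(\phi(z_{N+1}-z_N)\phi(z_N-z_i)+\phi(z_N-z_{N+1})\phi(z_{N+1}-z_i)\big)P_i
\]
minus the coefficient of $p_i$ in $\mathcal E^{\textrm{ell}}_N$. By the very definition of $\mathcal E^{\textrm{ell}}_N$, that coefficient is exactly this bracket minus $\phi(z_N-z_i)\phi(z_{N+1}-z_i)$, all multiplied by $P_i$. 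The difference is therefore precisely the left-hand coefficient.

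For $N=1$ the interior sum is empty and $\mathcal E^{\textrm{ell}}_1=0$. Reading $D^1_1(z_1)=p_1$ and $D^1_1(z_2)=p_2$, the identity reduces to the definition of $D^1_2$.

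There is no genuine analytic obstacle here: unlike the trigonometric case, no functional identity is invoked. The auxiliary operator $\mathcal E^{\textrm{ell}}_N$ is defined to absorb exactly the discrepancy that, for $w$, collapses to the constant $t$. The only step needing care is the bookkeeping of which shifts occur in which restricted operator, in particular that $p_N$ and $p_{N+1}$ each occur in exactly one of the two $D^1_N$ terms. One might additionally remark that in the trigonometric limit the bracket in $\mathcal E^{\textrm{ell}}_N$ degenerates to a constant times $P_i$, as in the proof of Proposition \eqref{eq: A_S recur}, recovering \eqref{eq: Level 1 type A recurrence}. That remark is not needed for the theorem itself.
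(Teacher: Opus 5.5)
Your proposal is correct and takes the paper's approach: compare the coefficient of each single shift $p_i$ on both sides, and for $i\le N-1$ the identity holds tautologically by the definition of $\mathcal E^{\textrm{ell}}_N$. You also verify the boundary shifts $p_N$, $p_{N+1}$ and the case $N=1$ explicitly, which the paper only says can be handled similarly.
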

\begin{proof}
    We prove by matching the coefficient of each shift operator $p_i$ on both sides of the recurrence relation for the case $1\le i \le N-1$. The edge cases $i=N,N+1$ can be handled similarly but with slightly simpler computations. The coefficient of the shift $p_i$ on the l.h.s. is given by: 
    \begin{equation}
        \label{eq: LHS_elliptic}
        \phi(z_N-z_i)\,\phi(z_{N+1}-z_i)\prod_{\substack{j=1\\j\neq i}}^{N-1}\phi(z_j-z_i);
    \end{equation}
    On the r.h.s., for $1\le i\le N-1$, the first and second summand contributes, respectively:
\begin{equation}
    \label{eq: RHS elliptic contribution}
    \begin{aligned}
    &\phi(z_{N+1}-z_N)\phi(z_N-z_i)\prod_{\substack{j=1\\j\neq i}}^{N-1}\phi(z_j-z_i), \quad \phi(z_N-z_{N+1})\phi(z_{N+1}-z_i)\prod_{\substack{j=1\\j\neq i}}^{N-1}\phi(z_j-z_i),
    \end{aligned}
\end{equation}
with the auxiliary operators: 
{\footnotesize
\begin{equation}
    \label{eq: RHS elliptic auxiliary}
\begin{aligned}
&-\Big(\phi(z_{N+1}-z_N)\phi(z_N-z_i)
+\phi(z_N-z_{N+1})\phi(z_{N+1}-z_i) -\phi(z_{N}-z_i)\phi(z_{N+1}-z_i)\Big)
\prod_{\substack{j=1\\j\neq i}}^{N-1}\phi(z_j-z_i)
\end{aligned}
\end{equation}}
Summing all the contributions from \eqref{eq: RHS elliptic contribution} and \eqref{eq: RHS elliptic auxiliary}, we obtain exactly the l.h.s. \eqref{eq: LHS_elliptic} after factoring out $\displaystyle \prod_{\substack{j=1\\j\neq i}}^{N-1}\phi(z_j-z_i)$.
\end{proof}

\subsection{The Trigonometric Limit}
In this section, we describe in detail the trigonometric limit of the elliptic Ruijsenaars operators to the usual type-$A_{N}$ Macdonald operator. As a result, we recover the three-term rank recursion relations for the type-$A_{N}$ Macdonald operators from the elliptic case.
Taking the limit $\mathfrak p=e^{\pi \imath \tau} \to 0$, with $|\mathfrak p|<1$, the theta function \eqref{eq: Jacobi theta} becomes: 
\[
\begin{aligned}
\vartheta(z)
&= -\sum_{k \in \Z}\mathfrak p^{(k+\frac{1}{2})^2}
\exp\left\{2 \pi \imath (k+\frac{1}{2})(z+\frac{1}{2})\right\}. = -2\sum_{n =0}^\infty\mathfrak p^{(n+\frac{1}{2})^2}
\cos\left\{2 \pi (n+\frac{1}{2})(z+\frac{1}{2})\right\} \\
&= 2\mathfrak p^{\frac{1}{4}}\sin\left(\pi z\right)+ O(\mathfrak p^{\frac{9}{4}}).
\end{aligned}
\]
Then, $\displaystyle \lim_{\mathfrak p \to 0} \phi(z)= \frac{\pi\sin(\pi(z+\hbar))}{\sin(\pi z)\sin(\pi\hbar)}$. Upon setting $\displaystyle x_i = e^{2 \pi \imath z_i}$ and $t = e^{-2 \pi \imath \hbar}$, we have:
\[
\begin{aligned}
\frac{tx_i-x_j}{x_i-x_j}
&=\frac{e^{2 \pi \imath (z_i-\hbar)}-e^{2 \pi \imath z_j}}{e^{2 \pi \imath z_i}-e^{2 \pi \imath z_j}}=e^{-\pi \imath \hbar}\frac{\sin(\pi(z_j-z_i+\hbar))}{\sin(\pi(z_j-z_i))}.
\end{aligned}
\]
Under $x_j=e^{2\pi i z_j}$, the shift $z_i\mapsto z_i+\eta$ sends $x_i\mapsto e^{2\pi i\eta}x_i$, so $p_i$ becomes $\Gamma_{i}$ with $q=e^{2\pi i\eta}$. Consequently $D^1_N\longrightarrow \left(\frac{\pi e^{\pi \imath\hbar}}{\sin(\pi\hbar)}\right)^{N-1}\mathcal M^1_N$. For the auxiliary operator, each summand tends to
$$
\begin{aligned}
&\phi(z_{N+1}-z_N)\phi(z_N-z_i)+\phi(z_N-z_{N+1})\phi(z_{N+1}-z_i)-\phi(z_N-z_i)\phi(z_{N+1}-z_i)\\
&\longrightarrow
\left(\frac{\pi e^{\pi \imath\hbar}}{\sin(\pi\hbar)}\right)^2\Big[
\frac{t x_N-x_{N+1}}{x_N-x_{N+1}}\frac{t x_i-x_N}{x_i-x_N}
+\frac{t x_{N+1}-x_N}{x_{N+1}-x_N}\frac{t x_i-x_{N+1}}{x_i-x_{N+1}}
-\frac{t x_i-x_N}{x_i-x_N}\frac{t x_i-x_{N+1}}{x_i-x_{N+1}}
\Big]\\
&\qquad = \left(\frac{\pi e^{\pi \imath\hbar}}{\sin(\pi\hbar)}\right)^2 t.
\end{aligned}
$$
Hence
$$
\begin{aligned}
\mathcal E_N
&\longrightarrow \left(\frac{\pi e^{\pi \imath\hbar}}{\sin(\pi\hbar)}\right)^{2N}
\,t\,\mathcal M^1_{N-1}.
\end{aligned}
$$
Dividing the limit of the recurrence by $\displaystyle \left(\frac{\pi e^{\pi \imath\hbar}}{\sin(\pi\hbar)}\right)^{2N}$ yields exactly the type-$A_N$ Macdonald operator rank recurrence relations \eqref{eq: Level 1 type A recurrence}.
\section{Discussion}

    The operators \eqref{eq:type D_1}-\eqref{eq:type_CN_macdo} form a commuting family of difference operators and have an intimate relationship with the representation theory of Double Affine Hecke Algebras (DAHA), see for example, \cite{Cherednik1995,Cherednik1997,Cherednik2005,Sun16}. Our work here does not seem to suggest any direct connection to DAHA literature, even in the case of type $A_{N}$ Macdonald operators. This is partly because in DAHA setting one typically fixes the number of variables that the operators act on. The procedure of removing variables resembles the notion of \emph{parabolic subgroups} of the underlying Weyl group, introduced in the earlier work of \cite{KazhdanLusztig1987,Deodhar1987,Lusztig1989} and also mentioned in the context of DAHA in \cite{Cherednik94,Cherednik2005}. However, we were not able to establish any algebraic connection between our auxiliary operators and the parabolic subgroups.

Let us note that in the case of Ruijsenaars operators, the last term is not itself a Ruijsenaars operator of lower rank, but rather involves a more complicated combination of operators. Nevertheless, in the trigonometric limit, this auxiliary operator reduces to a lower-rank Macdonald operator and matches the recurrences for the type-$A$ Macdonald operators. We suspect that this holds for higher order Ruijsenaars operators as well, where there exists a similar reduction to lower-rank Macdonald operators in the trigonometric limit but the precise form of the auxiliary operator may be more complicated. 
    
It is also worth mentioning that the Ruijsenaars operators have  higher order generalizations introduced in \cite{FeiginSilantyev2014} with elliptic extension by \cite{HLNR22}. We notice that in \cite{HLNR22}, the four generalized higher order Ruijsenaars operators can also be written in terms of other operators with coefficients are determinants of certain matrices (see Corollary 2.2 in \cite{HLNR22}). This is somewhat analogous to our formula in the previous work in \cite{DV26}, Remark 3.8.

\vskip 2cm
\textbf{AI Statement}: The paper used Deepseek-V4.1-Flash to verify the author's ansatz for Section 2 and produced the formula for type $C_N$ Macdonald operators decomposition. The author verified all the statements and we take full responsibility for the correctness of the result.

\newpage

\begin{appendix}
    \section{Root Systems and Their Associated Macdonald Operators}
In this appendix, we follow the presentation in \cite{Macdonald2000} and adding details where needed for our paper. 
\subsection{Root Systems and Their Weyl Groups}
Fix a real vector space $V$ endowed with a symmetric bilinear form $(\cdot,\cdot)$. A \emph{root system} $\Phi$ in $V$ is a finite set of non-zero vectors (called \emph{roots}) satisfying the following conditions:
\begin{enumerate}
    \item $\Phi$ spans $V$.
    \item If $\alpha \in \Phi$, then the only scalar multiples of $\alpha$ in $\Phi$ are $\alpha$ and $-\alpha$.
    \item For any $\alpha \in \Phi$, the reflection $s_\alpha$ defined by $s_\alpha(v) = v - 2\frac{(\alpha,v)}{(\alpha,\alpha)}\alpha$ leaves $\Phi$ invariant.
    \item For any $\alpha, \beta \in \Phi$, the number $2\frac{(\alpha,\beta)}{(\alpha,\alpha)}$ is an integer.
\end{enumerate}
Define the coroot $\alpha^\vee$ corresponding to a root $\alpha \in \Phi$ by
\[
    \alpha^\vee = \frac{2\alpha}{(\alpha,\alpha)}.
\]

The \emph{weight lattice} is defined as $\Lambda: = \{\lambda \in \R^N \mid (\lambda, \alpha_i^\vee) \in \mathbb{Z}, 1\le i\le n\}$ and members of $\Lambda$ are called \emph{weights}. Similarly, define the coweight lattice $\Lambda^\vee: = \{\lambda^\vee \in \R^N \mid (\alpha_i, \lambda^\vee) \in \mathbb{Z}, 1\le i\le n\}$ and members of $\Lambda^\vee$ are called \emph{coweights}. In short, weights pair integrally with coroots, while coweights pair integrally with roots. A coweight $\lambda^\vee \in \Lambda^\vee$ is called \emph{dominant} if $(\lambda^\vee, \alpha_i) \ge 0$ for all $i$ and the dominant coweight is called \emph{minuscule} if $(\lambda^\vee, \alpha_i) \in \{0,1\}$ for all $i$. We also defined \emph{fundamental coweights} to be those vectors $\omega_a^\vee \in \Lambda^\vee$ such that $(\omega_a^\vee, \alpha_i) = \delta_{ai}$ for all $i$. We summarize all of the above data in Table \ref{table:root-weight-data}.

\begin{table}
\centering
\small
\begin{adjustbox}{max width=\textwidth}
\begin{tabular}[t]{|l|l|l|l|l|}
\hline Algebra & Fundamental weights $\omega_a$ & Fundamental coweights $\omega_a^\vee$ & simple roots $\alpha_a$ & simple coroots $\alpha^\vee_a$ \\
\hline $A_{N-1}$ & $\omega_a=\sum_{i=1}^a \hat{e}_i, a \in[1, N-1]$ & $\omega_a^\vee=\omega_a$ & $\alpha_a=e_a-e_{a+1}$ & $\alpha^\vee_a=e_a-e_{a+1}$ \\
\hline $B_N$ & $\omega_a= \begin{cases}\sum_{i=1}^a e_i, & a<N ; \\ \frac{1}{2} \sum_{i=1}^N e_i, & a=N .\end{cases}$ & $\omega^\vee_a = \sum_{i=1}^a e_i, 1\leq a\leq N$ & $\begin{aligned}
& \alpha_a=e_a-e_{a+1}, a<N \\
& \alpha_N= e_N, a=N
\end{aligned}
$ & $\begin{aligned}
& \alpha^\vee_a = e_a-e_{a+1}, a<N \\
& \alpha^\vee_N = 2 e_N, a=N
\end{aligned}
$ \\
\hline $C_N$ & $\omega_a=\sum_{i=1}^a e_i, a \in[1, N]$ & $\omega^\vee_a=\frac{1}{2}\begin{cases}\sum_{i=1}^a e_i, a \in[1, N-1]\\  \sum_{i=1}^N e_i, a=N \end{cases}$ & $\begin{aligned}
& \alpha_a=e_a-e_{a+1}, a<N \\
& \alpha_N= 2 e_N, a=N
\end{aligned}$ & $\begin{aligned}
& \alpha^\vee_a =e_a-e_{a+1}, a<N \\
& \alpha^\vee_N = e_N, a=N
\end{aligned}$ \\
\hline $D_N$ & $\begin{aligned}
& \omega_a=\sum_{i=1}^a e_i, a<N-1 \\
& \omega_{N-1}=\frac{1}{2}\left(\omega_{N-2}+e_{N-1}-e_N\right) \\
& \omega_N=\frac{1}{2}\left(\omega_{N-2}+e_{N-1}+e_N\right)
\end{aligned}$ & $\omega_a^\vee=\omega_a$ & $\begin{aligned}
& \alpha_a=e_a-e_{a+1}, a<N \\
& \alpha_N=e_{N-1}+e_N
\end{aligned}$ & $\begin{aligned}
& \alpha^\vee_a=e_a-e_{a+1}, a<N \\
& \alpha^\vee_N=e_{N-1}+e_N
\end{aligned}$ \\
\hline
\end{tabular}
\end{adjustbox}
\caption{Root and weight data for the classical Lie algebras}
\label{table:root-weight-data}
\end{table}

Given a root system $\Phi$, define the \emph{Weyl group} of $\Phi$ to the group generated by the reflections $s_\alpha$ for all $\alpha \in \Phi$. We abuse the notation and use $x_i$ for the coordinates of a vector in the standard orthonormal basis of $\R^N$ similar to those in Table \ref{table:root-weight-data}, i.e. a vector $v=\sum_{i=1}^Na_ie_i$. Then, let us derive the Weyl group for each classical root system in terms of the $a_i$ coordinates.

\textbf{Type $A_{N-1}$:}
The simple roots are $\alpha_r = e_r - e_{r+1}$ for $r=1, \ldots, N-1$. Then,
\[s_{\alpha_i}(v)=v - 2\frac{(v,\alpha_i)}{(\alpha_i,\alpha_i)}\alpha_i =v-(v_i-v_{i+1})(e_i - e_{i+1}) = (v_1, \ldots, v_{i+1}, v_i, \ldots, v_N).\]
Then, the Weyl group of type $A_{N-1}$ is isomorphic to the symmetric group $S_N$, which are all transpositions $i \mapsto i+1$.

\textbf{Type $B_N$:} The simple roots  $\alpha_r = e_r - e_{r+1}$ are similar to those of type $A_{N-1}$ for $r=1, \ldots, N-1$. The Weyl group element corresponds to $\alpha_N$ is
\[s_{\alpha_N}(v) = v - 2\frac{(v,\alpha_N)}{(\alpha_N,\alpha_N)}\alpha_N = v - 2v_N e_N = (v_1, \ldots, v_{N-1}, -v_N).\]
Then, the Weyl group of type $B_N$ is isomorphic to $S_N \ltimes (\mathbb{Z}/2\mathbb{Z})^N$, which comprises all signed permutations of the coordinates.

\textbf{Type $C_N$:} Similar to type $B_N$, with the simple root $\alpha_N = 2 e_N$ giving the Weyl group element
\[s_{\alpha_N}(v) = v - 2\frac{(v,\alpha_N)}{(\alpha_N,\alpha_N)}\alpha_N = v - 2v_N e_N = (v_1, \ldots, v_{N-1}, -v_N).\]

\textbf{Type $D_N$:} Besides transpositions, we also have the Weyl group element corresponding to the simple root $\alpha_N = e_{N-1} + e_N$:
\[s_{\alpha_N}(v) = v - 2\frac{(v,\alpha_N)}{(\alpha_N,\alpha_N)}\alpha_N = v - (v_{N-1} + v_N)(e_{N-1} + e_N) = (v_1, \ldots, v_{N-2}, -v_{N-1}, -v_N).\]
Then, the Weyl group of type $D_N$ is isomorphic to $S_N \ltimes (\mathbb{Z}/2\mathbb{Z})^{N-1}$, which are all signed permutations of the coordinates with an even number of sign changes.

\subsection{Orbit Sum Formula}
\label{subapp:Orbit Sum}
The Macdonald operators associated to these root systems are defined as the Weyl group orbit sum: 
\begin{equation}
    \label{eq: Weyl group sum}
    \mathcal E_\lambda=\sum_{\mu \in W\lambda}\prod_{\substack{\alpha \in \Phi \\ \langle \mu,\alpha\rangle>0}} \frac{1-t x^{\alpha}}{1-x^{\alpha}} \Gamma^\mu
\end{equation}
where here $x$ is now a variable and $x^\alpha=\prod_{i=1}^N x_i^{\alpha_i}$ for $\alpha=\sum_{i=1}^N a_i e_i$. Then, one can verify from Table \ref{table:root-weight-data} that every fundamental coweight of $A_{N-1}$ is minuscule, while there is only one minuscule fundamental coweight for $B_N$ and $C_N$ and three for $D_N$. We will use the notation $\Phi_{B_N}, \Phi_{C_N}$ and $\Phi_{D_N}$ to denote their root system. Let us derive the Macdonald operators corresponding to these minuscule coweights in the case of type $B_N$, $C_N$ and $D_N$. 
One can verify that from all the fundamental coweight in Table \ref{table:root-weight-data}, the only minuscule fundamental coweight is $\omega_1^\vee=e_1$ for type $B_N$, $\displaystyle \omega^\vee_N=\frac{1}{2}\sum_{i=1}^{N} e_i$ for type $C_N$, and $\omega_1^\vee=e_1$, $\displaystyle \omega_{N-1}^\vee=\frac{1}{2}\sum_{i=1}^{N-1} e_i - e_N$, $\displaystyle \omega_N^\vee=\frac{1}{2}\sum_{i=1}^{N-1} e_i + e_N$ for type $D_N$. The Weyl orbit of these minuscule coweights are: 
\begin{itemize}
\item \emph{Type $B_N$}: $W\omega_1^\vee = \{\pm e_i \mid i=1, \ldots, N\}$. Thus, the sum in \eqref{eq: Weyl group sum} runs over all $\mu = \pm e_i$ for $i=1, \ldots, N$. Then, the only root $\alpha \in \Phi_{B_N}$ such that $( \mu, \alpha ) > 0$ are $\alpha = \epsilon e_i, \epsilon e_i+e_j$ and $\epsilon e_i-e_j$ for $\mu = \epsilon e_i$. These contribute to the exactly the factors: 
\[\frac{1-t x_i^\epsilon}{1-x_i^\epsilon} \prod_{j \neq i} \frac{1-t x_i^\epsilon x_j}{1-x_i^\epsilon x_j} \frac{t x_i^\epsilon-x_j}{x_i^\epsilon-x_j}.\]
Then the operator is exactly as \eqref{eq:type_BN_macdo}.
\item \emph{Type $C_N$}: $W\omega_N^\vee = \left\{\frac{1}{2}\sum_{i=1}^{N} \epsilon_i e_i \mid \epsilon_i = \pm 1\right\}$. Thus, the sum in \eqref{eq: Weyl group sum} runs over all $\mu = \frac{1}{2}\sum_{i=1}^{N} \epsilon_i e_i$ with $\epsilon_i = \pm 1$. In the root system of type $C_N$ the only roots $\alpha \in \Phi_{C_N}$ such that $\langle \mu, \alpha \rangle > 0$ are $\alpha = 2\epsilon_i e_i$ and $\alpha = \epsilon_i e_i + \epsilon_j e_j$ for $i<j$. These contribute to the exactly the factors: 
\[\frac{1-t x_i^{2 \epsilon_i}}{1-x_i^{2 \epsilon_i}} \prod_{1 \leq i<j \leq N} \frac{1-t x_i^{\epsilon_i} x_j^{\epsilon_j}}{1-x_i^{\epsilon_i} x_j^{\epsilon_j}}\]
\item \emph{Type $D_N$}: 
The minuscule coweights are:
\[
\omega_1^\vee=e_1,\qquad
\omega_{N-1}^\vee=\frac{1}{2}\left(\sum_{i=1}^{N-1} e_i-e_N\right),\qquad
\omega_N^\vee=\frac{1}{2}\left(\sum_{i=1}^{N-1} e_i+e_N\right).
\]

For $\omega_1^\vee=e_1$, we have $W\omega_1^\vee=\{\pm e_i\mid i=1,\ldots,N\}$. Hence for $\mu=\epsilon e_i$, the roots $\alpha\in\Phi_{D_N}$ with $(\mu,\alpha)>0$ are $\alpha=\epsilon e_i\pm e_j$ with $j\neq i$, which contribute the factors
\[
\prod_{j\neq i}\left(\frac{1-tx_i^{\epsilon}x_j}{1-x_i^{\epsilon}x_j}\right)\left(\frac{1-tx_i^{\epsilon}x_j^{-1}}{1-x_i^{\epsilon}x_j^{-1}}\right).
\]

For $\omega_{N-1}^\vee=\frac{1}{2}\left(\sum_{i=1}^{N-1} e_i-e_N\right)$, we have
\[
W\omega_{N-1}^\vee=\left\{\frac{1}{2}\sum_{i=1}^{N-1}\epsilon_i e_i-e_N \middle| \epsilon_i=\pm1,\ \prod_{i=1}^{N-1}\epsilon_i=1\right\}.
\]
Writing $\mu=\frac12\sum_{k=1}^{N}\delta_k e_k$, the roots with $(\mu,\alpha)>0$ are $\alpha=\delta_i e_i+\delta_j e_j$ for $1\le i<j\le N$, and they contribute
\[
\prod_{1\le i<j\le N}\frac{1-tx_i^{\delta_i}x_j^{\delta_j}}{1-x_i^{\delta_i}x_j^{\delta_j}}.
\]
Similar factor for the case  $\omega_N^\vee=\frac{1}{2}\left(\sum_{i=1}^{N-1} e_i+e_N\right)$

\end{itemize}

\end{appendix}

\newpage
% Bibliography disabled until real citations are added.
\bibliographystyle{alpha}
\bibliography{ref.bib}

\end{document}